\theoremstyle{plain}
\newtheorem{theorem}{Theorem}
\newtheorem{proposition}{Proposition}
\newtheorem{corollary}{Corollary}
\theoremstyle{definition}
\newtheorem{definition}{Definition}
\theoremstyle{remark}
\newtheorem{remark}{Remark}
\newtheorem{example}{Example}
\begin{document}

\title{Strong Movable Categories and Strong Movability of Topological Spaces}

\author{Pavel S. Gevorgyan}
\address{Moscow Pedagogical State University}
\email{pgev@yandex.ru}

\author{T. A. Avakyan}
\address{Moscow Power Engineering Institute, Moscow, Russia}

\begin{abstract}
The paper is devoted to one of the important notions of the shape theory: that of strong movability, which was primarily introduced by K. Borsuk for metrizable compacts. A strong movability criterion is proved for topological spaces, which in particular reveals a new, categorical approach to the strong movability.
\end{abstract}

\keywords{Shape, strong movability, inverse system, category.}

\subjclass{54C56, 55P55}

\maketitle

\section{Introduction}

Strong movability is one of the important notions of the shape theory. It had a significant role first of all in the study of stable topological spaces, i.e. spaces having a shape of some CW-complex (see \cite{borsuk}, \cite{dyd}, \cite{wat}). The strong movability notion primarily was introduced by K. Borsuk \cite{borsuk} for metrizable compacts by means of the neighborhoods of the given compact, which is embedded with its closure in some AR-space. For arbitrary topological space, the notion of strong movability was introduced by Marde\v{s}i\'{c} \cite{mardsegal2} by means of the inverse spectrums associated with the given space.

In particular, the results of this paper permit to apply a new, more general approach to the notion of strong movability. This approach is based on some ideas applied in \cite{gev1}, \cite{gev2} to the movability of topological spaces. As it shown in \cite{gevpop} and \cite{pop}, the same ideas are applicable also for studying the uniform movability of topological spaces.

The criterion of strong movability of topological spaces, which is obtained in the present paper, leads also to some improvements and remarks on the statements of Theorem 4.1 in \cite{gev1} and Theorem 3 in \cite{gev2}.

\section{Main Notions}

Recall some definitions and notions of the category and functor theory and shape theory, which are to be used in this paper.

The class of all objects of a category $K$ is denoted by $\mathrm{Ob}(K)$. If $A$ and $B$ are objects of $K$, for briefness, $A,B \in K$, then the class of all morphisms from $A$ to $B$ is denoted by $\mathrm{Hom}_K(A,B)$ or, briefly, $\mathrm{Hom}(A,B)$, when the context clearly shows which category is considered. We denote this class also by $\mathrm{Mor}_K(A,B)$.

\begin{definition}
A category $K$ is said to be functorially dominated by a category $L$, or briefly $K\leqslant L$, if there exist functors $F:K\to L$ and $G:L\to K$, such that $G\circ F = 1_K$.
\end{definition}

\begin{definition}\label{def-fm}
Let $F$ and $G$ be covariant functors from the category $K$ to the category $L$. Then, a functor morphism or a natural transformation $\varphi:F\to G$ of $F$ to $G$ is said to be given, if for any object $A\in K$ a morphism $\varphi(A):F(A)\to G(A)$ of the category L is given, such that for any morphism $f:A\to B$ of the category $K$ the diagram
\begin{center}
\begin{picture}(80,90)
\put(0,0){$F(B)$} \put(0,50){$F(A)$} \put(80,0){$G(B)$}
\put(80,50){$G(A)$}
\put(30,54){\vector(1,0){43}} \put(13,45){\vector(0,-1){28}}
\put(30,4){\vector(1,0){43}} \put(93,45){\vector(0,-1){28}}
\put(-13,25){$F(f)$} \put(97,25){$G(f)$} \put(40,-8){$\varphi(B)$}
\put(40,58){$\varphi(A)$} 
\end{picture} 
\end{center} \ \\
is commutative. For indicating that there is a natural transformation $\varphi:F\to G$ of the functor $F$ to the functor $G$, we write $F\rightsquigarrow G$.
\end{definition}

Let $(K_i)$, $i\in I$, be a family of categories. The product of categories $K_i$, $i\in I$, is the category $\prod\limits_{i\in I}K_i$ the objects of which are all possible families $(X_i)$, $i\in I$, where $X_i$ are the objects of categories $K_i$, $i\in I$, and the morphisms are defined as
$$\mathrm{Hom}_{\prod\limits_{i\in I}K_i}((X_i),(Y_i)) = \prod_{i\in I}
\mathrm{Hom}_{K_i}(X_i,Y_i) .$$
If 
$$(u_i)\in \mathrm{Hom}_{\prod\limits_{i\in I}K_i}((X_i),(Y_i))$$
and
$$(v_i)\in \mathrm{Hom}_{\prod\limits_{i\in I}K_i}((Y_i),(Z_i)),$$
then the composition of these morphisms is defined by the formula
$$(v_i)\circ (u_i) = (v_i\circ u_i) .$$

By $HTOP$ we mean a homotopy category of topological spaces and by $HCW$ its complete subcategory, the objects of which are topological spaces that have the homotopy type of CW-complexes.

\begin{definition}[K. Morita \cite{morita}]\label{defass}
The inverse system or the inverse spectrum $\{X_\alpha , p_{\alpha \alpha '}, A\}$ of a category $HCW$ is said to be \emph{associated} with the topological space $X$, if for any $\alpha \in A$ there exists a homotopy class $p_\alpha : X \to X_\alpha $ such that:

$1^\circ$ \ $p_{\alpha \alpha'} \circ p_{\alpha'}=p_\alpha$ for any $\alpha, \alpha' \in A$, $\alpha<\alpha'$; 

$2^\circ$ for any homotopy class $f:X\to Q$, $Q\in$ Ob($HCW$), there are some $\alpha \in A$ and a homotopy class $f_\alpha:X_\alpha \to Q$ such that $f=f_\alpha \circ p_\alpha$;

$3^\circ$ for any $\alpha \in A$ and any homotopy classes $f_\alpha, g_\alpha :X_\alpha \to Q$, $Q\in$Ob($HCW$), which satisfy the condition $f_\alpha \circ p_\alpha = g_\alpha \circ p_\alpha$, there exists an index $\alpha ' \in A$, $\alpha \leqslant \alpha '$, such that $f_\alpha \circ p_{\alpha \alpha '} = g_\alpha \circ p_{\alpha \alpha '}$.
\end{definition}

It is known (see \cite{morita}) that there is an associated inverse spectrum to any topological space $X$ of the category $HCW$. In other words, $HCW$ is a dense subcategory of the category $HTOP$. Hence, the shape theory can be constructed for all topological spaces by application of the associated inverse spectrums.

The shape theory of topological spaces can be constructed also by means of comma categories (see \cite{mardsegal1}). To this end, to each topological space $X$ a comma category $W^X$ is put into correspondence, whose objects are homotopy classes $f : X \to Q$ and morphisms are commutative triangles of the form
\begin{center}
\begin{picture}(80,80)
\put(0,0){$Q'$} \put(40,40){$X$} \put(80,0){$Q$ ,}
\put(38,38){\vector(-1,-1){28}} \put(52,38){\vector(1,-1){28}}
\put(74,3){\vector(-1,0){60}} \put(14,21){$f'$} \put(73,21){$f$}
\put(40,-4){$\eta $}
\end{picture}   
\end{center}\ \\
where $Q, Q' \in$ Ob($HCW$). Then, the shape morphism $F:X\to Y$ is determined as the covariant
functor $F:W^Y \to W^X$ which does not change the homotopy class $\eta$.

\begin{definition}[S. Marde\v{s}i\'{c}, J. Segal \cite{mardsegal1}]\label{def1}
The inverse system $\{X_\alpha , p_{\alpha \alpha '}, A\}$ of a category $HCW$ is called \emph{strongly movable} if the following condition is fulfilled:

(SM1) For any  $\alpha \in A$, there exists $\alpha ' \in A$, $\alpha ' \geqslant \alpha $, such that for any $\alpha '' \in A$, $\alpha '' \geqslant \alpha $, there exists $\alpha ^* \in A$, $\alpha ^* \geqslant \alpha '$, $\alpha ^* \geqslant \alpha ''$ and a homotopy class $r^{\alpha ' \alpha ''}:X_{\alpha '}\to X_{\alpha ''}$ such that the below equalities simultaneously hold:
\begin{equation}
p_{\alpha \alpha'}=p_{\alpha \alpha''}r^{\alpha ' \alpha ''} ,
\end{equation}
\begin{equation}\label{eq1}
r^{\alpha ' \alpha ''} \circ p_{\alpha' \alpha ^*}=p_{\alpha'' \alpha ^*}.
\end{equation}
A topological space $X$ is said to be \emph{strongly movable}, if in category $HCW$ there exists an
associated with it strongly movable inverse system $\{X_\alpha , p_{\alpha \alpha '}, A\}$.
\end{definition}

For more detailed information on the above notions of category and functor theory and shape theory, we recommend the monographs \cite{bukur} and \cite{mardsegal1}.

\section{Strongly Movable Categories}

Let $K$ and $L$ be any categories and let $\Phi:K\to L$ be any covariant functor.

\begin{definition}\label{movcat}
We say that the category $K$ is movable with respect to the category $L$ and the functor  $\Phi:K\to L$, if for any object $X\in ob(K)$ there exist another object $M(X)\in ob(K)$ and a morphism $m_X\in Mor_{K}(M(X), X)$, such that for any object $Y\in ob(K)$ and any morphism $p\in Mor_{K}(Y, X)$ there is another morphism $u\in Mor_L(\Phi(M(X)), \Phi(Y))$ for which  $\Phi(p)\circ u = \Phi(m_X)$.
\end{definition}

If $K$ is a complete subcategory of the category $L$ and $\Phi:K\hookrightarrow L$ is an embedding functor of Definition \ref{movcat}, then the subcategory $K$ is said to be \emph{movable with respect to the category L} (see Definition 1 in \cite{gev1}).

Supposing that $X$ is a topological space with a comma category  $W^X$, we introduce a covariant functor $\Omega:W^X\to HCW$, which puts into correspondence to each object $f:X\to Q$ an object $Q\in HCW$ and to each morphism $\eta : (f:X\to Q)\to (f':X\to Q')$, $(\eta \circ f = f')$ a morphism  $\eta : Q\to Q'$. We call $\Omega$ \emph{forgetful functor}.

\begin{theorem}\label{th-1}
A topological space $X$ is movable if and only if the comma category $W^X$ is movable with respect to the category $HCW$ and the forgetful functor $\Omega:W^X\to HCW$.
\end{theorem}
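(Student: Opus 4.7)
Suppose $X$ is movable, and fix a movable associated inverse system $\{X_\alpha, p_{\alpha\alpha'}, A\}$ in $HCW$. Given an object $f: X \to Q$ of $W^X$, the plan is to apply property $2^\circ$ of Definition \ref{defass} to get $\alpha \in A$ and $f_\alpha: X_\alpha \to Q$ with $f = f_\alpha \circ p_\alpha$, then invoke movability of the spectrum at $\alpha$ to produce $\alpha' \geq \alpha$, and take $M(f) := (p_{\alpha'}: X \to X_{\alpha'})$ with $m_f := f_\alpha \circ p_{\alpha\alpha'}: X_{\alpha'} \to Q$ as the morphism $M(f) \to f$ in $W^X$ (the triangle closes by $1^\circ$). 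For any $g: X \to N$ in $W^X$ and morphism $p: g \to f$, I would factor $g = g_\beta \circ p_\beta$ via $2^\circ$ with $\beta \geq \alpha$, apply $3^\circ$ to the relation $p \circ g_\beta \circ p_\beta = f_\alpha \circ p_{\alpha\beta} \circ p_\beta$ to obtain $\gamma \geq \beta$ with $p \circ g_\beta \circ p_{\beta\gamma} = f_\alpha \circ p_{\alpha\gamma}$, and then use movability of the spectrum to produce $r: X_{\alpha'} \to X_\gamma$ with $p_{\alpha\gamma} \circ r = p_{\alpha\alpha'}$. The morphism $u := g_\beta \circ p_{\beta\gamma} \circ r$ in $HCW$ will then satisfy $p \circ u = m_f$, verifying the categorical movability.

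\textbf{Reverse direction.} Suppose $W^X$ is movable with respect to $HCW$ and $\Omega$, and let $\{X_\alpha, p_{\alpha\alpha'}, A\}$ be any associated inverse system in $HCW$. To show it is movable, fix $\alpha$ and apply the categorical hypothesis to $f := p_\alpha$, obtaining $M(f) = (m: X \to M)$ and $\mu: M \to X_\alpha$ with $\mu \circ m = p_\alpha$. Using $2^\circ$ write $m = \nu \circ p_{\alpha_1}$ with $\alpha_1 \geq \alpha$ and $\nu: X_{\alpha_1} \to M$, and then apply $3^\circ$ to the identity $(\mu \circ \nu) \circ p_{\alpha_1} = p_{\alpha\alpha_1} \circ p_{\alpha_1}$ to obtain $\alpha' \geq \alpha_1$ with $\mu \circ \nu \circ p_{\alpha_1\alpha'} = p_{\alpha\alpha'}$; this $\alpha'$ will serve as the witness. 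For any $\alpha'' \geq \alpha$, the data $g := p_{\alpha''}$ and $p := p_{\alpha\alpha''}$ form a morphism to $p_\alpha$ in $W^X$, so the categorical hypothesis supplies $u: M \to X_{\alpha''}$ in $HCW$ with $p_{\alpha\alpha''} \circ u = \mu$. Then $r := u \circ \nu \circ p_{\alpha_1\alpha'}: X_{\alpha'} \to X_{\alpha''}$ satisfies $p_{\alpha\alpha''} \circ r = \mu \circ \nu \circ p_{\alpha_1\alpha'} = p_{\alpha\alpha'}$, as required.

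\textbf{Main obstacle.} The algebraic manipulations are essentially forced; the delicate point is the coordination of indices. The categorical hypothesis returns morphisms in $HCW$ between arbitrary objects occurring in $W^X$, whereas the inverse-system condition demands an honest bonding map $X_{\alpha'} \to X_{\alpha''}$ satisfying a precise equality. Properties $2^\circ$ and $3^\circ$ of an associated spectrum bridge this gap, but each invocation introduces a new, larger index, so the plan must sequence the applications (first $2^\circ$ to recover indices, then $3^\circ$ to promote an equality-after-composition to a genuine equality of bonding maps, finally the movability hypothesis) and verify that all indices and factorizations land in positions compatible with the previously chosen $M(f)$ and $m_f$.
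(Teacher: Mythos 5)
Your proof is correct, and both directions (the choice $M(f)=p_{\alpha'}$, $m_f=f_\alpha\circ p_{\alpha\alpha'}$ in the forward direction, and the extraction of $\alpha'$ from $M(f)$ via conditions $2^\circ$ and $3^\circ$ in the reverse direction) are sound, with the index bookkeeping handled properly. The paper gives no proof of Theorem \ref{th-1} beyond citing Theorem 4 of \cite{gev1}, but your argument follows exactly the same scheme the paper writes out in detail for the strong-movability analogue (Theorem \ref{th-main}): factor through the associated system by $2^\circ$, promote an equality after composition with $p_\beta$ to an equality of maps by $3^\circ$, then apply movability of the spectrum.
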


The proof of this theorem follows from Theorem 4 in \cite{gev1} (see also Theorem 4.2 in \cite{gev2}). Note that the above Theorem \ref{th-1} corrects an inaccuracy which we found in the statement of Theorem 3 in \cite{gev1} (see also Theorem 4.1 in \cite{gev2}).

\begin{proposition}\label{prop1}
Let a category $K$ be movable with respect to the category $L$ and the functor $\Phi:K\to L$, and let $F:L\to L'$ be an arbitrary functor. Then the category $K$ is movable with respect to the category $L'$ and the functor $F\circ \Phi:K\to L'$.
\end{proposition}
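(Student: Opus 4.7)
The plan is to unfold the definition of movability and exploit functoriality of $F$: the same ``movable data'' $M(X)$ and $m_X$ that witnesses movability of $K$ with respect to $(L,\Phi)$ should witness movability with respect to $(L',F\circ\Phi)$, after passing the lifted morphism through $F$.

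More concretely, I would fix an arbitrary object $X\in\mathrm{Ob}(K)$ and invoke the hypothesis to produce an object $M(X)\in\mathrm{Ob}(K)$ together with a morphism $m_X\in\mathrm{Mor}_K(M(X),X)$ having the defining property. I would then take exactly the same $M(X)$ and $m_X$ as the candidate data for the functor $F\circ\Phi$. To verify the defining property, I would pick any $Y\in\mathrm{Ob}(K)$ and any $p\in\mathrm{Mor}_K(Y,X)$, apply the hypothesis to obtain some $u\in\mathrm{Mor}_L(\Phi(M(X)),\Phi(Y))$ satisfying $\Phi(p)\circ u=\Phi(m_X)$, and then set $u':=F(u)\in\mathrm{Mor}_{L'}(F\Phi(M(X)),F\Phi(Y))$.

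The required identity follows immediately from the fact that $F$ is a covariant functor: applying $F$ to both sides of $\Phi(p)\circ u=\Phi(m_X)$ gives
\begin{equation*}
(F\circ\Phi)(p)\circ u' = F(\Phi(p))\circ F(u) = F(\Phi(p)\circ u) = F(\Phi(m_X)) = (F\circ\Phi)(m_X),
\end{equation*}
which is exactly the condition in Definition \ref{movcat} for the functor $F\circ\Phi$.

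There is no genuine obstacle here; the statement is essentially a formal consequence of the fact that a functor preserves compositions, so the movability property transports along arbitrary post-composition with functors out of $L$. The only point to watch is not to alter the objects $M(X)$ or the morphism $m_X\in\mathrm{Mor}_K(M(X),X)$, which live in $K$ and do not depend on the target category, and to keep the existential quantifier on $u$ in the correct category at each stage (in $L$ when using the hypothesis, in $L'$ when verifying the conclusion).
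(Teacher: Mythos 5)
Your proposal is correct and follows exactly the same route as the paper: keep the same $M(X)$ and $m_X$, obtain $u$ from the hypothesis, and take $F(u)$ as the required morphism, using covariance of $F$ to transport the identity $\Phi(p)\circ u=\Phi(m_X)$ to $(F\circ\Phi)(p)\circ F(u)=(F\circ\Phi)(m_X)$. No differences worth noting.
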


\begin{proof}
By Definition \ref{movcat}, for any object $X\in ob(K)$ there exist another object $M(X)\in ob(K)$ and
a morphism $m_X\in Mor_{K}(M(X), X)$, such that for any object $Y\in ob(K)$ and any morphism $p\in Mor_{K}(Y, X)$ there is a morphism $u\in Mor_L(\Phi(M(X)), \Phi(Y))$ for which $\Phi(p)\circ u = \Phi(m_X)$. Hence, $F(\Phi(p))\circ F(u) = F(\Phi(m_X))$, since $F:L\to L'$ is a covariant functor. This means that $F(u)\in Mor_{L'}((F\circ \Phi)(M(X)), (F\circ \Phi)(Y))$ is the required morphism, i.e. the category $K$ is movable with respect to the category $L'$ and the functor $F\circ \Phi:K\to L'$.
\end{proof}

\begin{definition}\label{def-str-mov}
We call a category $K$ \emph{strongly movable}, if it is movable with respect to itself and the identical functor $1_K$. In other words, if for any object $X\in ob(K)$ there exist an object $M(X)\in ob(K)$ and a morphism   $m_X\in Mor_{K}(M(X), X)$, such that for any object $Y\in ob(K)$ and any morphism $p\in Mor_{K}(Y, X)$ there is a morphism $u_p\in Mor_K(M(X), Y)$ for which $p\circ u_p = m_X$.
\end{definition}

The naturalness of this definition will become evident later, after the proof of Theorem \ref{th-main1}.

\begin{example}
Let $S_0$ be a category of sets with a tagged element, whose objects are all possible pairs $(A,a_0)$, where $A$ is a nonempty set and let $a_0$ be some tagged element of $A$. Further, let the morphisms of $S_0$ be all possible mappings $(A,a_0)\to (B,b_0)$ transforming $a_0$ to a tagged element $b_0$. Then it is not difficult to verify that the category $S_0$ is strongly movable.
\end{example}

\begin{example}
Let $Gr$ be a category of groups whose objects are all possible groups and morphisms are all possible homomorphisms between groups. Then, one can easily verify that the category $Gr$ is strongly movable.
\end{example}

The next theorem gives many interesting, non trivial examples of strongly movable categories.

\begin{theorem}\label{lem1}
Let $Q$ be an arbitrary $CW$-complex. Then the comma category $W^Q$ is strongly movable.
\end{theorem}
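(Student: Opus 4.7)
The plan is to observe that $W^Q$ has an initial object and to take this object as $M(X)$ uniformly for every $X$. Specifically, my candidate is $M(X) = (1_Q : Q \to Q)$, where $1_Q$ denotes the homotopy class of the identity map. Since $Q$ is a $CW$-complex we have $Q \in \mathrm{Ob}(HCW)$, so this pair is a legitimate object of $W^Q$. A direct reading of the definition of morphisms in $W^Q$ shows that from $(1_Q : Q \to Q)$ to an arbitrary object $(f : Q \to P)$ there is exactly one morphism, represented by the homotopy class $f$ itself (any such $\eta$ must satisfy $\eta \circ 1_Q = f$, forcing $\eta = f$); this uniqueness is what makes $(1_Q : Q \to Q)$ an initial object of $W^Q$.

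I would then take $m_X$ to be this unique morphism. To verify the condition of Definition \ref{def-str-mov}, let $Y = (g : Q \to R)$ be any object of $W^Q$ and $p : Y \to X$ any morphism, meaning a homotopy class $p : R \to P$ with $p \circ g = f$. By the same uniqueness argument there is exactly one morphism $u_p : M(X) \to Y$, namely the one represented by the homotopy class $g$. The required equation $p \circ u_p = m_X$ then reduces to $p \circ g = f$, which is precisely the condition that $p$ be a morphism in $W^Q$.

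The essential, and only nontrivial, step is thus the identification of the initial object; the remainder of the argument is forced by universality. In fact the same reasoning shows that any category possessing an initial object is strongly movable in the sense of Definition \ref{def-str-mov}, so I do not expect a real obstacle beyond spotting the candidate $(1_Q : Q \to Q)$ and noting that the hypothesis ``$Q$ is a $CW$-complex'' enters only to guarantee $Q \in \mathrm{Ob}(HCW)$.
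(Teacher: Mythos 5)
Your proof is correct and uses exactly the same construction as the paper: take $M(X)=(1_Q:Q\to Q)$, let $m_X$ be the morphism represented by $f$ itself, and let $u_p$ be the morphism represented by $g$, so that $p\circ u_p=m_X$ is just the defining equation of the morphism $p$ in $W^Q$. Your additional observation that $(1_Q:Q\to Q)$ is an initial object, and that any category with an initial object is strongly movable, is a clean conceptual packaging of the same argument.
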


\begin{proof}
Assuming that  $X=f':Q\to Q'$ is any object of the category $W^Q$, we will prove that the object $M(X)=1_Q: Q\to Q$ and the morphism $m_X=f': M(X)\to X$ of the comma category $W^Q$
satisfy the conditions of strong movability for the comma category $W^Q$ (see Definition \ref{def-str-mov}). Indeed, let $Y=f'':Q\to Q''$ be an arbitrary object and let $p:Y\to X$ be an arbitrary morphism of the comma category $W^Q$. By the definition of the comma category $W^Q$, $p=\eta:Y\to X$ is a homotopy class satisfying the equality
\begin{equation}\label{eq8}
\eta \circ f'' = f'.
\end{equation}
We consider $u_p:M(X)\to Y$ as the required morphism $u_p=f'':M(X)\to Y$. Then, it remains to verify that $p \circ u_p = m_X$, which coincides with the equality \ref{eq8} accurate within notation.
\end{proof}

\begin{proposition}
If $K$ is a strong movable category, then it is movable with respect to any category $L$ and any functor $\Phi:K\to L$.
\end{proposition}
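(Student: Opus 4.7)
The plan is to observe that this proposition is an immediate consequence of Proposition \ref{prop1}, combined with the definition of strong movability. By Definition \ref{def-str-mov}, asserting that $K$ is strongly movable is exactly the statement that $K$ is movable with respect to itself and the identity functor $1_K:K\to K$. I can then invoke Proposition \ref{prop1} with the roles of $L$ and $\Phi$ played by $K$ and $1_K$, respectively, and with the roles of $L'$ and $F$ played by the given $L$ and $\Phi:K\to L$. The conclusion yields that $K$ is movable with respect to $L$ and $\Phi\circ 1_K=\Phi$, which is exactly what is required.

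If one prefers a self-contained argument, I would instead unfold both definitions directly. Given an arbitrary $X\in\mathrm{ob}(K)$, use strong movability of $K$ to pick $M(X)\in\mathrm{ob}(K)$ and $m_X\in\mathrm{Mor}_K(M(X),X)$ with the property that every morphism $p\in\mathrm{Mor}_K(Y,X)$ admits a lift $u_p\in\mathrm{Mor}_K(M(X),Y)$ satisfying $p\circ u_p=m_X$. Applying the covariant functor $\Phi$ to this equality yields $\Phi(p)\circ\Phi(u_p)=\Phi(m_X)$ in $L$, so that $\Phi(u_p)\in\mathrm{Mor}_L(\Phi(M(X)),\Phi(Y))$ witnesses movability of $K$ with respect to $L$ and $\Phi$ in the sense of Definition \ref{movcat}.

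There is no real obstacle in either approach: the only thing one has to notice is that the object $M(X)$ and the morphism $m_X$ supplied by strong movability can be used verbatim, and that functoriality of $\Phi$ transports the required factorization from $K$ to $L$. In particular, the candidate morphism $u$ in $L$ can even be chosen in the image of $\Phi$, which is strictly stronger than what Definition \ref{movcat} demands.
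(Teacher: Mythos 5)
Your first argument is exactly the paper's proof: strong movability of $K$ is by definition movability with respect to $K$ and $1_K$, and Proposition \ref{prop1} applied with $F=\Phi$ gives movability with respect to $L$ and $\Phi\circ 1_K=\Phi$. The proposal is correct and takes essentially the same approach; the self-contained unfolding you add is just Proposition \ref{prop1}'s proof inlined.
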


\begin{proof}
By Definition \ref{def-str-mov} of strong movability, $K$ is movable with respect to itself and the identical functor $1_K:K\to K$. Hence, by Proposition \ref{prop1} the category $K$ is movable with respect to the category $L$ and the functor $\Phi \circ 1_K = \Phi :K\to L$.
\end{proof}

\begin{proposition}
Let a category $K$ be movable with respect to the category $L$ and the functor $\Phi:K\to L$. If $\Phi:K\to L$ is functorial domination, then $K$ is a strongly movable category.
\end{proposition}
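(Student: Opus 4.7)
The plan is to use the movability hypothesis to produce a morphism $u$ in $L$, and then transport it back to $K$ using the retraction functor $G$ furnished by the functorial domination $\Phi$. Since the definition of functorial domination (Definition 1) supplies a functor $G:L\to K$ with $G\circ\Phi=1_K$, the objects $M(X)$ and morphism $m_X:M(X)\to X$ witnessing movability of $K$ with respect to $L$ and $\Phi$ should, I expect, already serve as the witnesses for strong movability of $K$.

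Concretely, I would fix an arbitrary object $X\in\mathrm{Ob}(K)$ and let $M(X)\in\mathrm{Ob}(K)$ and $m_X\in\mathrm{Mor}_K(M(X),X)$ be the data guaranteed by Definition \ref{movcat}. Given any $Y\in\mathrm{Ob}(K)$ and any $p\in\mathrm{Mor}_K(Y,X)$, the hypothesis provides a morphism $u\in\mathrm{Mor}_L(\Phi(M(X)),\Phi(Y))$ satisfying $\Phi(p)\circ u=\Phi(m_X)$. I would then set $u_p:=G(u)$.

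The verification reduces to applying $G$ to the equation in $L$ and invoking functoriality: since $G$ is a covariant functor, $G(\Phi(p))\circ G(u)=G(\Phi(m_X))$, and since $G\circ\Phi=1_K$, this collapses to $p\circ u_p=m_X$. The same identity $G\circ\Phi=1_K$ ensures that $u_p=G(u)$ is a morphism from $G(\Phi(M(X)))=M(X)$ to $G(\Phi(Y))=Y$, so $u_p\in\mathrm{Mor}_K(M(X),Y)$ as required by Definition \ref{def-str-mov}.

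Honestly, I do not expect a substantial obstacle here: the argument is essentially the formal observation that a retraction functor pulls the auxiliary morphism $u$ back into $K$, and the hypothesis $G\circ\Phi=1_K$ does all the work. The only point requiring any care is keeping track that the witnessing object and morphism for strong movability can be chosen to be the same $M(X)$ and $m_X$ already given by movability, rather than some modified pair obtained by applying $G$ nontrivially.
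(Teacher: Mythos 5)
Your proof is correct and is essentially the paper's argument unwound: the paper simply cites Proposition \ref{prop1} with $F=\Psi$ (your $G$) to conclude that $K$ is movable with respect to itself and $\Psi\circ\Phi=1_K$, which is strong movability by definition, whereas you inline that same computation of applying the retraction functor to the equation $\Phi(p)\circ u=\Phi(m_X)$. No gap; the witnesses $M(X)$, $m_X$ are indeed unchanged in both versions.
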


\begin{proof}
In our conditions, there exists a functor $\Psi:L\to K$ such that $\Psi \circ \Phi = 1_K$. Hence, by Proposition \ref{prop1} the category $K$ is movable with respect to itself and the functor $\Psi \circ \Phi = 1_K:K\to K$. Thus, $K$ is a strongly movable category.
\end{proof}

\begin{theorem}\label{th-proizved}
The product $\prod\limits_{i\in I}K_i$ of categories $K_i$, $i\in I$, is strongly movable if and only if all its multipliers $K_i$, $i\in I$, are strongly movable.
\end{theorem}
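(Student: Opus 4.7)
The plan is to verify both directions of the equivalence component-wise, using that morphisms in a product category are tuples of morphisms and composition is done coordinate-wise.

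For the ``if'' direction, I would begin with the easier implication. Assume each $K_i$ is strongly movable. Given any object $X=(X_i)\in\prod_{i\in I}K_i$, use the strong movability of each $K_i$ to pick an object $M_i(X_i)\in\mathrm{Ob}(K_i)$ and a morphism $m_{X_i}\in\mathrm{Mor}_{K_i}(M_i(X_i),X_i)$ witnessing strong movability for $X_i$. Set $M(X)=(M_i(X_i))$ and $m_X=(m_{X_i})$. For an arbitrary $Y=(Y_i)$ and morphism $p=(p_i)\in\mathrm{Mor}_{\prod K_i}(Y,X)$, apply strong movability in each $K_i$ to $p_i:Y_i\to X_i$ to obtain $u_i\in\mathrm{Mor}_{K_i}(M_i(X_i),Y_i)$ with $p_i\circ u_i=m_{X_i}$. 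Then $u_p:=(u_i)$ is a morphism in the product and $p\circ u_p=(p_i\circ u_i)=(m_{X_i})=m_X$ by the definition of composition in the product category.

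For the ``only if'' direction, fix $i_0\in I$ and suppose $\prod_{i\in I}K_i$ is strongly movable. I must show $K_{i_0}$ is strongly movable. The essential idea is to lift a given object of $K_{i_0}$ to the product by padding the other coordinates with fixed objects, apply strong movability in the product, and then read off the $i_0$-th coordinate. Concretely, choose once and for all an object $c_i\in\mathrm{Ob}(K_i)$ for each $i\neq i_0$ (this is where a nonemptiness assumption is implicitly used; if some factor is empty the product is empty and the statement is vacuous). Given $X_{i_0}\in\mathrm{Ob}(K_{i_0})$, form $\widetilde{X}\in\prod K_i$ by setting $\widetilde{X}_{i_0}=X_{i_0}$ and $\widetilde{X}_i=c_i$ for $i\neq i_0$. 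Apply strong movability in the product to obtain $M(\widetilde{X})=(M(\widetilde{X})_i)$ and $m_{\widetilde{X}}=(m_{\widetilde{X},i})$, and declare $M(X_{i_0}):=M(\widetilde{X})_{i_0}$ and $m_{X_{i_0}}:=m_{\widetilde{X},i_0}$.

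To verify the universal property in $K_{i_0}$, take $Y_{i_0}\in\mathrm{Ob}(K_{i_0})$ and $p\in\mathrm{Mor}_{K_{i_0}}(Y_{i_0},X_{i_0})$. Form $\widetilde{Y}$ with $\widetilde{Y}_{i_0}=Y_{i_0}$ and $\widetilde{Y}_i=c_i$ for $i\neq i_0$, and define $\widetilde{p}\in\mathrm{Mor}_{\prod K_i}(\widetilde{Y},\widetilde{X})$ by $\widetilde{p}_{i_0}=p$ and $\widetilde{p}_i=1_{c_i}$ for $i\neq i_0$. Strong movability of the product supplies $u\in\mathrm{Mor}_{\prod K_i}(M(\widetilde{X}),\widetilde{Y})$ with $\widetilde{p}\circ u=m_{\widetilde{X}}$, so projecting to the $i_0$-th coordinate gives $p\circ u_{i_0}=m_{X_{i_0}}$, as required. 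The only mild obstacle is that bookkeeping step of embedding $K_{i_0}$ into the product via fixed base objects and reading off a single coordinate; the remainder is just the observation that composition in $\prod K_i$ is coordinate-wise.
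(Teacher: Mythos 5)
Your proposal is correct and follows essentially the same argument as the paper: the ``if'' direction assembles the witnesses coordinate-wise, and the ``only if'' direction pads an object of $K_{i_0}$ to an object of the product, pads the test morphism with identities in the other coordinates, and projects the resulting $u$ back to the $i_0$-th coordinate. The only cosmetic difference is that you fix base objects $c_i$ once and for all (and note the nonemptiness caveat), whereas the paper simply completes $X$ to an arbitrary tuple $(X_i)$ and reuses those components with identity morphisms for the padded $Y$.
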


\begin{proof}
Let the product $\prod\limits_{i\in I}K_i$ be strongly movable. For proving that for any $i_0\in I$ the category $K_{i_0}$ is strongly movable, we suppose that $X$ is an arbitrary object of the category $K_{i_0}$ and $(X_i)\in \prod\limits_{i\in I}K_i$ is an object, where $X_{i_0}=X$. Then, by strong movability of the product $\prod\limits_{i\in I}K_i$, for $(X_i)\in \prod\limits_{i\in I}K_i$ there exist an object $(M(X_i))\in \prod\limits_{i\in I}K_i$ and a morphism $(m_{X_i}):(M(X_i))\to (X_i)$. By $M(X_{i_0})$ we denote the $i_0$-th component of the object $(M(X_i))$ and by
$m_{X_{i_0}}$ the $i_0$-th component of the morphism $(m_{X_i})$.

Let now $Y$ be an object and $p:Y\to X$ be a morphism of the category $K_{i_0}$. We consider the object $(Y_i)\in \prod\limits_{i\in I}K_i$ and the morphism $(p_i):(Y_i)\to
(X_i)$, where $Y_{i_0}=Y$, $p_{i_0}=p$, and $Y_i=X_i$, $p_i=1_{X_i}$, for $i\neq i_0$. Let $(u_i):(M(X_i))\to (Y_i)$ be morphism whose existence follows from the definition of the strong movability of the object $(X_i)$, i.e. $(p_i)\circ (u_i)=(m_{X_i})$. Then, the morphism $u_{i_0}:M(X_{i_0})\to Y_{i_0}=Y$ satisfies the equality $p_{i_0}\circ u_{i_0}=m_{X_{i_0}}$, and hence $K_{i_0}$ is a strongly movable category.

Now, we prove the converse statement, i.e. if the category $K_i$ is strongly movable for any $i\in I$, then also the product $\prod\limits_{i\in I}K_i$ is a strongly movable category. To this end, we suppose that $(X_i)$ is an object of the category $\prod\limits_{i\in I}K_i$. Then for each $X_i\in K_i$ there is an object $M(X_i)$ and a morphism $m_{X_i}:M(X_i)\to X_i$, whose existence follows from the definition of the strong movability for $K_i$. We
consider the object $(M(X_i))\in \prod\limits_{i\in I}K_i$ and the morphism $(m_{X_i}):(M(X_i))\to (X_i)$.

Let $(Y_i)\in \prod\limits_{i\in I}K_i$ be an object and let $(p_i):(Y_i)\to (X_i)$ be a morphism. Then for each $p_i:Y_i\to X_i$, $i\in I$, there is a morphism $u_i:M(X_i)\to Y_i$ such that $p_i\circ u_i = m_{X_i}$. Hence, the morphism $(u_i): (M(X_i))\to (Y_i)$ satisfies the equality $(p_i)\circ (u_i) = (m_{X_i})$ which means that the category $\prod\limits_{i\in I}K_i$ is
strongly movable.
\end{proof}

\begin{definition}\label{def-sd}
We say that a category $K$ is functorially weak dominated by a category $L$, or briefly write  $K\lesssim L$, if there exist some functors $F:K\to L$ and  $G:L\to K$, such that $G\circ F \rightsquigarrow 1_K$ (see Definition \ref{def-fm}).
\end{definition}

\begin{remark}\label{rem1}
It is obvious that functorial domination implies weak functorial domination, i.e. $K\leqslant L$ implies $K\lesssim L$.
\end{remark}

\begin{theorem}\label{th2}
If $K\lesssim L$, then the strong movability of the category $L$ implies the strong movability of the category $K$.
\end{theorem}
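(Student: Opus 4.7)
The plan is to use the hypothesis $K \lesssim L$ to transport the strong movability data from $L$ back to $K$. By Definition \ref{def-sd} we have functors $F:K\to L$ and $G:L\to K$ together with a natural transformation $\varphi:G\circ F \rightsquigarrow 1_K$, so for every object $X\in K$ there is a morphism $\varphi(X):G(F(X))\to X$, and for every $f:A\to B$ in $K$ the square $f\circ \varphi(A)=\varphi(B)\circ G(F(f))$ commutes. The goal is to produce, for each $X\in K$, an object $M(X)\in K$ and a morphism $m_X:M(X)\to X$ witnessing the condition of Definition \ref{def-str-mov}.

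The construction I would use is the obvious one: given $X\in K$, apply strong movability of $L$ to the object $F(X)\in L$ to obtain $M(F(X))\in L$ and $m_{F(X)}:M(F(X))\to F(X)$. Then set
\[
M(X):=G(M(F(X))) \qquad \text{and} \qquad m_X:=\varphi(X)\circ G(m_{F(X)}):M(X)\to X.
\]
The pattern is to do the strong movability step in $L$ (where it is available) and then drag the result back into $K$ through $G$, correcting the discrepancy between $G\circ F$ and $1_K$ with the natural transformation $\varphi$.

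To verify the condition, let $Y\in K$ and $p\in \mathrm{Mor}_K(Y,X)$ be arbitrary. Pushing forward via $F$ gives $F(p):F(Y)\to F(X)$, so by the strong movability of $L$ applied to $F(X)$ there is a morphism $v\in \mathrm{Mor}_L(M(F(X)),F(Y))$ with $F(p)\circ v=m_{F(X)}$. I would then define
\[
u_p:=\varphi(Y)\circ G(v):M(X)\longrightarrow Y.
\]
The required identity $p\circ u_p=m_X$ follows by chaining naturality of $\varphi$ at $p$ with functoriality of $G$:
\[
p\circ u_p = p\circ \varphi(Y)\circ G(v)=\varphi(X)\circ G(F(p))\circ G(v)=\varphi(X)\circ G(F(p)\circ v)=\varphi(X)\circ G(m_{F(X)})=m_X.
\]

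The only non-routine point is the second paragraph's choice of $m_X$ and the middle step in the verification: one must be careful to invoke the naturality square of $\varphi$ with respect to the morphism $p$ itself (not $u_p$ or $v$), since this is exactly what lets $p$ commute past $\varphi(Y)$ and leaves behind $G(F(p))$, which then absorbs into the strong movability identity in $L$. Once this is recognized, the argument is essentially formal, and it makes clear why weak functorial domination (rather than strict functorial domination $G\circ F=1_K$) is already enough: naturality of $\varphi$ provides the single commutation that is needed.
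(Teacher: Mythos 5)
Your proposal is correct and coincides with the paper's own argument: the same choice $M(X)=G(M(F(X)))$, $m_X=\varphi(X)\circ G(m_{F(X)})$, the same lift $v$ in $L$ with $u_p=\varphi(Y)\circ G(v)$, and the same verification via the naturality square of $\varphi$ at $p$. No gaps.
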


\begin{proof}
We suppose that $F:K\to L$ and $G:L\to K$ are functors, such that $G\circ F\rightsquigarrow 1_K$, and $\varphi : G\circ F\to 1_K$ is some natural transformation. Then, for an arbitrary object $X$ of the category $K$ we consider the object $M(F(X))$ and the morphism $m_{F(X)}:M(F(X))\to F(X)$ of the category $L$, whose existence follows from the definition of strong movability for $L$. Besides, for $X\in K$ the object $M(X)=G(M(F(X)))\in K$ and the morphism $m_{X}=\varphi(X)\circ G(m_{F(X)}):M(X)\to X$ satisfies the conditions of strong movability of the category $K$. Indeed, let $Y$ be an object and let $p:Y\to X$ be a morphism of the category $K$. By movability of the category $L$ for the morphism $F(p):F(Y)\to F(X)$ of $L$ there is a morphism $v:M(F(X))\to F(Y)$ such that $F(p)\circ v=m_{F(X)}$. It remains to see that the morphism $u=\varphi(Y)\circ G(v):M(X)\to Y$ satisfies the equality $p\circ u = m_{X}$. Indeed, 
$$p\circ u = (p\circ \varphi(Y)) \circ G(v)) = (\varphi(X)\circ G(F(p))) \circ G(v)) = \varphi(X)\circ G(m_{F(X)}) = m_{X}.$$
Thus, $K$ is a strongly movable category, and the proof is complete.
\end{proof}

Theorem \ref{th2} and Remark \ref{rem1} immediately imply the following corollary.

\begin{corollary}\label{cor1}
If a category $K$ is functorially dominating the category $L$, i.e. $K\leqslant L$, and the category $L$ is strongly movable, then also $K$ is strongly movable.
\end{corollary}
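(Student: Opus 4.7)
The plan is to chain the two preceding results together, with essentially no new work. Given $K \leqslant L$, Definition 1 provides functors $F : K \to L$ and $G : L \to K$ with the strict equality $G \circ F = 1_K$. To pass from this to $K \lesssim L$ in the sense of Definition \ref{def-sd}, I would simply take the identity natural transformation $\varphi : G \circ F \to 1_K$ defined by $\varphi(X) = 1_X$ for each $X \in K$; the naturality square of Definition \ref{def-fm} commutes on the nose because, under $G \circ F = 1_K$, both $G(F(f)) \circ 1_A$ and $1_B \circ f$ reduce to $f$. This is exactly the content of Remark \ref{rem1}.

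Once $K \lesssim L$ has been established, Theorem \ref{th2} applies verbatim with this choice of $F$, $G$, and $\varphi$, and it delivers the strong movability of $K$ from that of $L$. There is no real obstacle: the only point worth pausing over is that a bare equality of functors qualifies as a natural transformation, and that is immediate from the definition. If one preferred an unpacked proof, one could simply trace the construction of Theorem \ref{th2} in this special case: for $X \in K$ set $M(X) = G(M(F(X)))$ and $m_X = G(m_{F(X)})$ (with no need for a $\varphi$-correction since $\varphi$ is the identity), and for any $p : Y \to X$ pull back the lift $v : M(F(X)) \to F(Y)$ given by strong movability of $L$ via $G$, noting that $G(F(p)) = p$ makes $p \circ G(v) = m_X$ hold automatically.
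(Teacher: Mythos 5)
Your proposal is correct and matches the paper exactly: the paper derives this corollary immediately from Remark \ref{rem1} (functorial domination implies weak functorial domination, via the identity natural transformation) together with Theorem \ref{th2}. The optional unpacked version you sketch is just Theorem \ref{th2} specialized to $\varphi = \mathrm{id}$ and is likewise fine.
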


\section{Strong Movability Of Topological Spaces}

The notion of strong movability of topological spaces was introduced by Marde\v{s}i\'{c} \cite{mardsegal2} (see Definition \ref{def1}). For the aims of this paper, we need to restate this definition in a somewhat different form.

\begin{proposition}\label{pr1}
Let the inverse system $\{X_\alpha , p_{\alpha \alpha '}, A\}$ of a category $HCW$ be associated with a topological space $X$. Then $X$ is strongly movable if and only if the following condition is fulfilled:

(SM2) for any $\alpha \in A$, there is $\alpha ' \in A$, $\alpha ' \geqslant \alpha$, such that for any $\alpha '' \in A$,
$\alpha '' \geqslant \alpha $, there exists a homotopy class $r^{\alpha ' \alpha ''}:X_{\alpha '}\to X_{\alpha ''}$ for which the following equalities simultaneously are true:
\begin{equation}\label{eq3-0}
p_{\alpha \alpha''}r^{\alpha ' \alpha ''} = p_{\alpha \alpha'},
\end{equation}
\begin{equation}\label{eq3}
r^{\alpha ' \alpha ''} \circ p_{\alpha'}=p_{\alpha''}.
\end{equation}
\end{proposition}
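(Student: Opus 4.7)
The plan is to prove each direction separately, treating the two conditions as equivalent once the universal properties $1^\circ$ and $3^\circ$ of an associated inverse system are invoked. Throughout, I rely on the fact that for an associated system $\{X_\alpha, p_{\alpha\alpha'}, A\}$ one has $p_{\alpha\alpha'} \circ p_{\alpha'} = p_\alpha$ whenever $\alpha \leqslant \alpha'$.

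The direction (SM1) $\Rightarrow$ (SM2) is the easy one. Given $\alpha$ and the corresponding $\alpha'$ from (SM1), and any $\alpha'' \geqslant \alpha$, I take the index $\alpha^*$ and the class $r^{\alpha'\alpha''}$ produced by (SM1). Equality (3-0) is literally equation (1). For equality (\ref{eq3}), I compose equation (\ref{eq1}) on the right with $p_{\alpha^*}: X \to X_{\alpha^*}$: the left-hand side becomes $r^{\alpha'\alpha''} \circ p_{\alpha'\alpha^*} \circ p_{\alpha^*} = r^{\alpha'\alpha''} \circ p_{\alpha'}$ by property $1^\circ$, while the right-hand side becomes $p_{\alpha''\alpha^*} \circ p_{\alpha^*} = p_{\alpha''}$, again by $1^\circ$. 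Thus $r^{\alpha'\alpha''} \circ p_{\alpha'} = p_{\alpha''}$, as required.

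The more interesting direction is (SM2) $\Rightarrow$ (SM1). Here the data from (SM2) produces $\alpha'$ and, for each $\alpha''$, a class $r^{\alpha'\alpha''}$, but no auxiliary index $\alpha^*$. I must manufacture one. I first pick any $\alpha^*_0 \in A$ with $\alpha^*_0 \geqslant \alpha'$ and $\alpha^*_0 \geqslant \alpha''$, using directedness of $A$. The two homotopy classes
$$r^{\alpha'\alpha''} \circ p_{\alpha'\alpha^*_0}, \quad p_{\alpha''\alpha^*_0} \;:\; X_{\alpha^*_0} \to X_{\alpha''}$$
become equal after precomposing with $p_{\alpha^*_0}: X \to X_{\alpha^*_0}$: indeed, by property $1^\circ$ and equation (\ref{eq3}),
$$r^{\alpha'\alpha''} \circ p_{\alpha'\alpha^*_0} \circ p_{\alpha^*_0} = r^{\alpha'\alpha''} \circ p_{\alpha'} = p_{\alpha''} = p_{\alpha''\alpha^*_0} \circ p_{\alpha^*_0}.$$
At this point I invoke the approximation property $3^\circ$ of an associated system (applied to $\alpha^*_0$, target $Q = X_{\alpha''}$): there exists $\alpha^* \geqslant \alpha^*_0$ such that
$$r^{\alpha'\alpha''} \circ p_{\alpha'\alpha^*_0} \circ p_{\alpha^*_0\alpha^*} = p_{\alpha''\alpha^*_0} \circ p_{\alpha^*_0\alpha^*},$$
which, by $1^\circ$, reduces to $r^{\alpha'\alpha''} \circ p_{\alpha'\alpha^*} = p_{\alpha''\alpha^*}$, exactly equation (\ref{eq1}). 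Since $\alpha^* \geqslant \alpha^*_0 \geqslant \alpha', \alpha''$, all inequalities required by (SM1) hold, and equation (1) is just (3-0).

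The main obstacle is really just locating the correct index $\alpha^*$ in the second direction; this is where property $3^\circ$ of Definition \ref{defass} does all the work, and without it one could never bridge from an equality holding after composition with $p_{\alpha^*_0}$ to one holding after composition with $p_{\alpha'\alpha^*}$. Everything else is bookkeeping with the compatibility relations of the associated system.
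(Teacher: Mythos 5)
Your proof is correct and follows essentially the same route as the paper's: the forward direction precomposes equality \eqref{eq1} with $p_{\alpha^*}$ and uses property $1^\circ$, and the converse direction picks an auxiliary upper bound (your $\alpha^*_0$ is the paper's $\alpha^0$) and applies property $3^\circ$ to produce the index $\alpha^*$ realizing \eqref{eq1}. No gaps.
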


\begin{proof}
Let $X$ be a strongly movable space. Then, in accordance with Definition \ref{def1} it suffices to see that the equality \eqref{eq3} is true. This equality follows from formula \eqref{eq1} and condition $1^\circ$ of Definition \ref{defass}, i.e.
$$r^{\alpha ' \alpha ''} \circ p_{\alpha'}=r^{\alpha ' \alpha ''} \circ p_{\alpha'\alpha^*}\circ p_{\alpha^*}=p_{\alpha''\alpha^*}\circ p_{\alpha^*}=p_{\alpha''}.$$
To prove that, conversely, the condition (SM2) provides the strong movability of the topological space $X$, i.e. the condition (SM1) of Definition \ref{def1}, we prove the existence of an index $\alpha ^* \in A$, $\alpha ^* \geqslant \alpha '$, $\alpha ^* \geqslant \alpha ''$, for which formula \eqref{eq1} is true. To this end, we observe that for any index $\alpha ^0 \in A$, $\alpha ^0 \geqslant \alpha '$, $\alpha ^0 \geqslant \alpha ''$, the equality \eqref{eq3} can be rewritten in the form
$$r^{\alpha ' \alpha ''} \circ p_{\alpha '\alpha^0} \circ p_{\alpha ^0}=p_{\alpha ''\alpha^0} \circ p_{\alpha ^0}.$$
By the condition $3^\circ$ of Definition \ref{defass} there is an index $\alpha ^* \in A$, $\alpha ^* \geqslant \alpha ^0$, such that 
$$r^{\alpha ' \alpha ''} \circ p_{\alpha '\alpha^0} \circ p_{\alpha ^0\alpha ^*}=p_{\alpha ''\alpha^0} \circ p_{\alpha ^0\alpha ^*}$$ 
or
$$r^{\alpha ' \alpha ''} \circ p_{\alpha '\alpha^*}=p_{\alpha ''\alpha^*}.$$
The proof is complete.
\end{proof}

\begin{theorem}\label{th-main}
A topological space $X$ is strongly movable if and only if the following condition is fulfilled:

$(*)$ for any CW-complex $Q$ and any homotopy class  $f:X\to Q$, there are a CW-complex $Q'$ and homotopy classes $f':X\to Q'$ and $\eta :Q'\to Q$ which satisfy the equality $f=\eta \circ f'$, so that for any CW-complex $Q''$ and any homotopy classes $f'':X\to Q''$ and $\eta ' :Q''\to Q$, for which $f=\eta '\circ f''$, there is a homotopy class $\eta '':Q'\to Q''$ such that
\begin{equation}\label{eq4}
\eta '\circ \eta '' = \eta,
\end{equation}
\begin{equation}\label{eq5}
\eta '' \circ f' =f ''.
\end{equation}
\end{theorem}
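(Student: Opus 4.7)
The condition $(*)$ is precisely the statement that the comma category $W^X$ is strongly movable in the sense of Definition \ref{def-str-mov}: given an object $f:X\to Q$, the role of $M(X)$ is played by $f':X\to Q'$, of $m_X$ by $\eta$, and of $u_p$ by $\eta''$. So the theorem asserts that $X$ is strongly movable if and only if the comma category $W^X$ is strongly movable, in parallel to Theorem \ref{th-1} for ordinary movability. The plan is to prove it using the reformulation (SM2) from Proposition \ref{pr1}, which is more convenient because its two defining equalities live directly in $HCW$ without an auxiliary index $\alpha^*$.

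For the forward direction, I would fix an associated inverse system $\{X_\alpha,p_{\alpha\alpha'},A\}$ satisfying (SM2). Given an object $f:X\to Q$ of $W^X$, apply property $2^\circ$ of Definition \ref{defass} to obtain some $\alpha$ and $f_\alpha:X_\alpha\to Q$ with $f=f_\alpha\circ p_\alpha$. Let $\alpha'\geqslant\alpha$ be the index furnished by (SM2) and take $Q'=X_{\alpha'}$, $f'=p_{\alpha'}$, $\eta=f_\alpha\circ p_{\alpha\alpha'}$; then $\eta\circ f'=f$ follows at once from $1^\circ$. Given any $f'':X\to Q''$ and $\eta':Q''\to Q$ with $\eta'\circ f''=f$, use $2^\circ$ to write $f''=f''_{\alpha''}\circ p_{\alpha''}$ with $\alpha''\geqslant\alpha$. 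Then $\eta'\circ f''_{\alpha''}$ and $f_\alpha\circ p_{\alpha\alpha''}$ agree only after composing with $p_{\alpha''}$, so I would apply $3^\circ$ to pass to some $\beta\geqslant\alpha''$ where they agree, replacing $f''_{\alpha''}$ by $f''_\beta:=f''_{\alpha''}\circ p_{\alpha''\beta}$. Setting $\eta'':=f''_\beta\circ r^{\alpha'\beta}$, where $r^{\alpha'\beta}$ is furnished by (SM2) applied to $\beta$, the identities \eqref{eq4} and \eqref{eq5} drop out directly from the two (SM2) equalities.

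For the converse, I would fix any associated system $\{X_\alpha,p_{\alpha\alpha'},A\}$ and verify (SM2). Given $\alpha\in A$, apply $(*)$ to the object $p_\alpha:X\to X_\alpha$ of $W^X$ to obtain $(Q',f':X\to Q',\eta:Q'\to X_\alpha)$ with $\eta\circ f'=p_\alpha$. Use $2^\circ$ to factor $f'$ through some $X_{\beta_0}$; then a refinement via directedness and $3^\circ$ (applied to $\eta\circ g_0\circ p_{\beta_0}=p_\alpha=p_{\alpha\beta_0}\circ p_{\beta_0}$) yields $\alpha'\geqslant\alpha$ and $g:X_{\alpha'}\to Q'$ with $g\circ p_{\alpha'}=f'$ and $\eta\circ g=p_{\alpha\alpha'}$. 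For any $\alpha''\geqslant\alpha$, the projection $p_{\alpha\alpha''}:X_{\alpha''}\to X_\alpha$ is a morphism in $W^X$ from $p_{\alpha''}$ to $p_\alpha$, so $(*)$ produces a morphism $\eta'':Q'\to X_{\alpha''}$ satisfying $p_{\alpha\alpha''}\circ\eta''=\eta$ and $\eta''\circ f'=p_{\alpha''}$. Setting $r^{\alpha'\alpha''}:=\eta''\circ g$ then gives both equations \eqref{eq3-0} and \eqref{eq3} of (SM2) by one-line computations.

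The main technical obstacle is the mismatch between the strict equalities required in $(*)$ and the ``equality only after a large enough projection'' supplied by axiom $3^\circ$ of Definition \ref{defass}. In the forward direction this forces the refinement from $\alpha''$ to a larger $\beta$ before applying (SM2); in the converse direction it forces the refinement from the first index $\beta_0$ to a suitable $\alpha'$ before declaring the candidate for (SM2). Once these refinements are absorbed, everything reduces to formal diagram chasing between $W^X$ and the associated inverse system.
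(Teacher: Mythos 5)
Your proposal is correct and follows essentially the same route as the paper: both directions use the reformulation (SM2) of Proposition \ref{pr1}, factor the relevant homotopy classes through the associated system via $2^\circ$, and absorb the ``equality only after a large enough projection'' issue through exactly the same refinements via $3^\circ$ (your $\beta$ and $\alpha'$ are the paper's $\alpha'''$ and $\alpha'$, and your $g$ is the paper's composite $\tilde{f'}\circ p_{\tilde{\alpha}\alpha'}$). The final verifications of \eqref{eq4}, \eqref{eq5}, \eqref{eq3-0} and \eqref{eq3} coincide with the paper's computations.
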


\begin{proof}
Supposing that the condition $(*)$ is fulfilled, we consider the spectrum $\{X_\alpha , p_{\alpha \alpha '}, A\}$ associated with $X$ and prove its strong movability, i.e. that it satisfies the condition (SM2) of Proposition \ref{pr1}. To this end, we assume that $\alpha \in A$ is any element and $p_\alpha :X\to X_\alpha $ is the natural projection. Then, for the homotopy class $p_\alpha :X\to X_\alpha $ we consider a suitable CW-complex $Q'$ and the homotopy classes $f':X\to Q'$ and $\eta :Q'\to X_\alpha $ satisfying the condition $\eta \circ f'=p_\alpha $, whose existence follows from $(*)$. Since $assX=\{X_\alpha , p_{\alpha \alpha '}, A\}$, for $f':X\to Q'$ there are $\tilde{a} \in A$, $\tilde{a}
\geqslant \alpha $ and $\tilde{f'}:X_{\tilde {\alpha }} \to Q'$, such that
\begin{equation}\label{1}
f'=\tilde{f'}\circ p_{\tilde{\alpha }}.
\end{equation}
It is not difficult to verify that
\begin{equation}\label{2}
p_{\alpha \tilde {\alpha }} \circ p_{\tilde {\alpha }}= \eta
\circ \tilde{f'} \circ p_{\tilde{\alpha }}.
\end{equation}
Indeed,
\[
\eta \circ \tilde{f'} \circ p_{\tilde{\alpha }}=\eta \circ
f'=p_\alpha= p_{\alpha \tilde {\alpha }} \circ p_{\tilde {\alpha
}},
\]
and by the equality \eqref{2} and Definition \ref{defass}, there is an index $\alpha ' \in A$, $\alpha '\geqslant
\tilde{\alpha}$, such that
\begin{equation}\label{3}
p_{\alpha \tilde {\alpha }} \circ p_{\tilde{\alpha
}\alpha '}= \eta \circ \tilde{f'} \circ p_{\tilde{\alpha }\alpha
'}.
\end{equation}
It turns out that this $\alpha ' \in A$ satisfies the condition (SM2) of Proposition \ref{pr1}. Indeed, if $\alpha '' \in A$, $\alpha ''\geqslant \alpha$, is any element, then by $(*)$ for the homotopy classes $p_{\alpha \alpha ''}:X_{\alpha ''}\to X_\alpha$ and $p_{\alpha ''}:X\to X_{\alpha ''}$ there exists a homotopy class $\eta '':Q' \to X_{\alpha ''}$ such that
\begin{equation}\label{4}
\eta=p_{\alpha \alpha ''} \circ \eta '' ,
\end{equation}
\begin{equation}\label{eq6}
\eta '' \circ f' =p_{\alpha ''}.
\end{equation}

Now, it is not difficult to verify that
$$r^{\alpha '\alpha ''}=\eta '' \circ
\tilde{f'} \circ p_{\tilde{\alpha} \alpha '}:X_{\alpha '}\to X_{\alpha ''}$$ 
is the required homotopy class, i.e. the equalities \eqref{eq3-0} and \eqref{eq3} are true (see Proposition \ref{pr1}). Indeed, \eqref{eq3-0} immediately follows by (\ref{4}) and (\ref{3}):
\[
p_{\alpha \alpha ''} \circ r^{\alpha '\alpha ''} =p_{\alpha \alpha ''} \circ \eta ''
\circ \tilde{f'} \circ p_{\tilde{\alpha} \alpha '} =\eta \circ \tilde{f'} \circ
p_{\tilde{\alpha} \alpha '} =p_{\alpha \tilde{\alpha}} \circ
p_{\tilde{\alpha} \alpha '} = p_{\alpha \alpha '}.
\]
and \eqref{eq3} follows from (\ref{1}) and (\ref{eq6}):
\[
r^{\alpha ' \alpha ''} \circ p_{\alpha'}=\eta '' \circ
\tilde{f'} \circ p_{\tilde{\alpha} \alpha '} \circ p_{\alpha'}=\eta '' \circ
\tilde{f'} \circ p_{\tilde{\alpha}} =\eta '' \circ f' =  p_{\alpha''}.
\]

Conversely, let $X$ be a strong movable topological space and let $\{X_\alpha , p_{\alpha \alpha '}, A\}$ be the associated with $X$ inverse spectrum. To prove $(*)$ we suppose that $f:X \to Q$ is any homotopy class where $Q$ is a CW-complex. The spectrum $\{X_\alpha , p_{\alpha \alpha '}, A\}$ is associated with the space $X$, and hence there are an index $\alpha \in A$ and a homotopy class $f_\alpha
:X_\alpha \to Q$, such that
\begin{equation}\label{6}
f=f_\alpha \circ p_\alpha.
\end{equation}
For this $\alpha \in A$, there is an index $\alpha ' \in A$,
$\alpha ' \geqslant \alpha$, which satisfies the strong movability condition (SM2) of the space $X$ (see Proposition \ref{pr1}). The CW-complex $X_{\alpha '}$ and the homotopy classes
\begin{equation}\label{14}
f'=p_{\alpha '} :X \to X_{\alpha '} \quad \text{and} \quad \eta = f_\alpha \circ p_{\alpha \alpha '}:X_{\alpha '} \to Q
\end{equation}
satisfy the condition $(*)$. Indeed, let $Q''$ be a CW-complex and let $f'':X \to Q''$, $\eta ':Q'' \to Q$ be homotopy classes satisfying the condition
\begin{equation}\label{8}
f=\eta ' \circ f''.
\end{equation}
Then for the homotopy class $f'':X \to Q''$ there are an index $\alpha '' \in A$, $\alpha '' \geqslant \alpha$, and a homotopy class $\tilde{f''}:X_{\alpha ''} \to Q''$ such that
\begin{equation}\label{9}
f''=\tilde{f''} \circ p_{\alpha ''}.
\end{equation}
From the equalities \eqref{6}, \eqref{8} and \eqref{9}, it follows that
\[
f_\alpha \circ p_{\alpha \alpha''} \circ p_{\alpha ''}= \eta '
\circ \tilde{f''} \circ p_{\alpha ''}.
\]
Consequently, in accordance with Definition \ref{defass} there is an index $\alpha ''' \in A$, $\alpha ''' \geqslant \alpha ''$, such that 
\begin{equation}\label{10}
f_\alpha \circ p_{\alpha \alpha''} \circ p_{\alpha ''
\alpha '''}= \eta ' \circ \tilde{f''} \circ p_{\alpha '' \alpha
'''}.
\end{equation}
By the strong movability of the space $X$ and Proposition \ref{pr1}, there is a homotopy class $r^{\alpha ' \alpha '''}:X_{\alpha '} \to X_{\alpha '''}$ such that
\begin{equation}\label{11}
p_{\alpha \alpha'''} \circ r^{\alpha ' \alpha '''} = p_{\alpha \alpha '},
\end{equation}
\begin{equation}\label{13}
r^{\alpha ' \alpha '''} \circ p_{\alpha'}=p_{\alpha'''}.
\end{equation}

Therefore, after setting
\begin{equation}\label{12}
\eta ''=\tilde{f''} \circ p_{\alpha '' \alpha '''} \circ
r^{\alpha ' \alpha '''}:X_{\alpha '} \to X_{\alpha '''}.
\end{equation}
it remains to see that the homotopy class $\eta '':X_{\alpha
'} \to Q''$ satisfies the equalities \eqref{eq4} and \eqref{eq5}.

Indeed \eqref{eq4} follows from \eqref{12}, \eqref{10}, \eqref{11} and \eqref{14}:
\begin{multline*}
\eta ' \circ \eta ''=\eta ' \circ \tilde{f''} \circ p_{\alpha ''
\alpha '''} \circ r^{\alpha ' \alpha '''} = f_\alpha \circ p_{\alpha \alpha ''} \circ
p_{\alpha '' \alpha '''} \circ r^{\alpha ' \alpha '''} = \\
=f_\alpha \circ p_{\alpha \alpha '''} \circ r^{\alpha ' \alpha '''} = f_\alpha \circ p_{\alpha \alpha '} = \eta .
\end{multline*}
Besides, \eqref{eq5} follows from \eqref{12}, \eqref{14}, \eqref{13} and \eqref{9}:
\begin{multline*}
\eta '' \circ f' = \tilde{f''} \circ p_{\alpha ''\alpha '''} \circ r^{\alpha ' \alpha '''} \circ p_{\alpha '} = \tilde{f''} \circ p_{\alpha '' \alpha '''} \circ p_{\alpha '''} = \tilde{f''} \circ p_{\alpha ''} = f''.
\end{multline*}
The proof is complete.
\end{proof}

The next theorem comes as a corollary of the above Theorem \ref{th-main}.

\begin{theorem}\label{th-main1}
A topological space $X$ is strongly movable if and only if the comma category $W^X$ is strongly movable.
\end{theorem}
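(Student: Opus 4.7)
The plan is to deduce Theorem \ref{th-main1} as a direct corollary of Theorem \ref{th-main}, by observing that the condition $(*)$ appearing there is, word for word, the statement that the comma category $W^X$ is strongly movable in the sense of Definition \ref{def-str-mov}. No real work is required beyond setting up the correct dictionary between the two formulations.

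First I would fix notation for $W^X$: its objects are homotopy classes $f:X\to Q$ with $Q\in \mathrm{Ob}(HCW)$, and a morphism from $(Q,f)$ to $(Q',f')$ is a homotopy class $\eta:Q\to Q'$ with $\eta\circ f=f'$, composition being that of the underlying homotopy classes. Next I would match the data of $(*)$ with the data of Definition \ref{def-str-mov} applied to $K:=W^X$, as follows. The object $f:X\to Q$ of $(*)$ plays the role of an arbitrary object of $W^X$. The pair $(Q',f')$ together with $\eta:Q'\to Q$ satisfying $f=\eta\circ f'$ is exactly an object $M((Q,f)):=(Q',f')$ of $W^X$ equipped with a morphism $m_{(Q,f)}:=\eta:(Q',f')\to (Q,f)$. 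An arbitrary $(Q'',f'')$ together with $\eta':Q''\to Q$ such that $f=\eta'\circ f''$ is an arbitrary object $Y:=(Q'',f'')$ with a morphism $p:=\eta':Y\to (Q,f)$. Finally, a homotopy class $\eta'':Q'\to Q''$ satisfying $\eta''\circ f'=f''$ is a morphism $u_p:=\eta'':M((Q,f))\to Y$ in $W^X$, and the equation $\eta'\circ \eta''=\eta$ is precisely the equation $p\circ u_p=m_{(Q,f)}$ in $W^X$.

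Under this identification, $(*)$ becomes literally the defining condition of strong movability for $W^X$. Combining this with Theorem \ref{th-main} yields the chain of equivalences: $X$ is strongly movable $\Longleftrightarrow$ $(*)$ holds $\Longleftrightarrow$ $W^X$ is strongly movable, which is the claim. Since Theorem \ref{th-main} has already done all the substantive work (translating between the inverse-system definition of strong movability of $X$ and the internal factorization condition $(*)$), there is no genuine obstacle in the present corollary. The only point one must handle with care is the direction of morphisms in $W^X$, so that the composition $p\circ u_p$ in the comma category agrees with the composition $\eta'\circ \eta''$ of the underlying homotopy classes appearing in $(*)$; this is a matter of fixing the convention for $W^X$ as an \emph{under category} of $X$ and checking that all arrows point the way the dictionary above requires.
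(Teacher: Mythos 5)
Your proposal is correct and matches the paper exactly: the paper states Theorem \ref{th-main1} as an immediate corollary of Theorem \ref{th-main} without further argument, and your dictionary identifying the data of condition $(*)$ with the data $M(X)$, $m_X$, $Y$, $p$, $u_p$ of Definition \ref{def-str-mov} for $K=W^X$ is precisely the intended (and valid) translation. Your care about the direction of morphisms is well placed but works out, since in the paper's convention a morphism $(f'':X\to Q'')\to(f:X\to Q)$ is a class $\eta':Q''\to Q$ with $\eta'\circ f''=f$, exactly as your identification requires.
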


One can see that Theorem \ref{th-main1} makes it possible to apply the results on strong movable categories in investigation of the strong movability property of topological spaces. Indeed, Theorems \ref{lem1} and \ref{th-main1} imply the well-known result that \emph{any CW-complex is a strong movable space}. Besides, the shape domination $sh(X)\leqslant sh(Y)$ means the category domination $W^X\leqslant W^Y$, and therefore Theorem \ref{th-main1} and Corollary \ref{cor1} imply another well-known result: \emph{if $sh(X)\leqslant sh(Y)$, and $Y$ is strongly movable topological spaces, then also the topological space $X$ is strongly movable}.

\begin{proposition}\label{th4}
Let $X$ be a non-connected topological sum of some topological spaces  $X_1$ and $X_2$, i.e. $X=X_1\sqcup X_2$. Then $W^X=W^{X_1\sqcup X_2}\lesssim W^{X_1}\times W^{X_2}$.
\end{proposition}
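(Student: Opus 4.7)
The plan is to exhibit the weak functorial domination directly by writing down the pair of functors and a natural transformation to the identity. Define $F\colon W^X\to W^{X_1}\times W^{X_2}$ by restriction: send an object $f\colon X\to Q$ to $(f\circ i_1,\,f\circ i_2)$, where $i_k\colon X_k\hookrightarrow X$ is the canonical inclusion, and send a morphism $\eta\colon (f\colon X\to Q)\to(f'\colon X\to Q')$ of $W^X$ (so $\eta\circ f=f'$) to $(\eta,\eta)$. This is a legitimate morphism in the product because $\eta\circ(f\circ i_k)=f'\circ i_k$ for $k=1,2$, and functoriality is immediate.

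In the other direction, define $G\colon W^{X_1}\times W^{X_2}\to W^X$ by disjoint union: $G(g_1\colon X_1\to Q_1,\,g_2\colon X_2\to Q_2)=g_1\sqcup g_2\colon X\to Q_1\sqcup Q_2$, with $G(\eta_1,\eta_2)=\eta_1\sqcup\eta_2$. Since a disjoint union of CW-complexes is again a CW-complex, $G$ really lands in $W^X$, and functoriality is routine.

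Composing, $(G\circ F)(f\colon X\to Q)=(f\circ i_1)\sqcup(f\circ i_2)\colon X\to Q\sqcup Q$. I would then take $\varphi(f)$ to be the homotopy class of the fold map $\nabla_Q\colon Q\sqcup Q\to Q$ (identity on each copy). This is a morphism $G(F(f))\to f$ in $W^X$ because $\nabla_Q\circ\bigl[(f\circ i_1)\sqcup(f\circ i_2)\bigr]=f$ by construction. Naturality with respect to $\eta\colon f\to f'$ reduces to the identity $\eta\circ\nabla_Q=\nabla_{Q'}\circ(\eta\sqcup\eta)$, which holds tautologically when restricted to either copy of $Q$.

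The construction is essentially forced, and there is no substantive obstacle; the only points requiring care are purely formal, namely verifying that restriction to clopen summands and disjoint union of maps both descend to well-defined operations on homotopy classes (which they do, since homotopies in $X=X_1\sqcup X_2$ split componentwise and homotopies of the pieces can be glued). Given all this, the existence of $F$, $G$, and $\varphi\colon G\circ F\rightsquigarrow 1_{W^X}$ witnesses $W^X\lesssim W^{X_1}\times W^{X_2}$ in the sense of Definition \ref{def-sd}.
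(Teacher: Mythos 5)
Your proposal is correct and follows essentially the same route as the paper: the restriction functor $F$, the disjoint-union functor $G$, and a natural transformation $G\circ F\rightsquigarrow 1_{W^X}$. The only difference is that you make explicit the fold map $\nabla_Q\colon Q\sqcup Q\to Q$ and its naturality, which the paper leaves as ``not difficult to verify.''
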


\begin{proof}
We have to show that there are some functors $F:W^{X_1\sqcup X_2}\to W^{X_1}\times W^{X_2}$ and $G:W^{X_1}\times W^{X_2}\to W^{X_1\sqcup X_2}$, such that $G\circ F \rightsquigarrow 1_{W^{X_1\sqcup X_2}}$ (see Definition \ref{def-sd}).

To this end, we suppose that $f:X_1\sqcup X_2\to Q$ is an object of the category $W^{X_1\sqcup X_2}$ and define the functor $F:W^{X_1\sqcup X_2}\to W^{X_1}\times W^{X_2}$ by the assumption that
$$F(f)=(f|_{X_1}:X_1\to Q, f|_{X_2}:X_2\to Q)\in W^{X_1}\times W^{X_2}.$$
Now, supposing that $(f_1:X_1\to Q, f_2:X_2\to Q')$ is any object of the category$W^{X_1}\times W^{X_2}$, we define the functor $G:W^{X_1}\times W^{X_2}\to
W^{X_1\sqcup X_2}$ by assuming that $G(f_1,f_2)=f:X_1\sqcup X_2 \to Q\sqcup Q'$, where $f|_{X_1}=f_1$ and $f|_{X_2}=f_2$. Then it is not difficult to verify that $G\circ F \rightsquigarrow 1_{W^{X\sqcup Y}}$. The proof is complete.
\end{proof}

Theorems \ref{th-proizved}, \ref{th2}, \ref{th-main1} and Proposition \ref{th4} imply the following statement.

\begin{theorem}
If a topological space $X$ has a finite set of connectivity components which all are strongly movable, then $X$ is strongly movable.
\end{theorem}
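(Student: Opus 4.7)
The plan is to chain together the four results cited just before the statement and then induct on the number of connectivity components. Concretely, write $X = X_1 \sqcup X_2 \sqcup \cdots \sqcup X_n$ where each $X_i$ is a strongly movable connectivity component. The base case $n=1$ is vacuous, so I will handle the case $n=2$ carefully and then promote to arbitrary finite $n$ by induction on $n$, using the associativity $X_1 \sqcup \cdots \sqcup X_n = (X_1 \sqcup \cdots \sqcup X_{n-1}) \sqcup X_n$.

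For the key step $n=2$, I would proceed as follows. By Theorem \ref{th-main1} applied to each summand, the comma categories $W^{X_1}$ and $W^{X_2}$ are strongly movable. Then Theorem \ref{th-proizved} guarantees that the product category $W^{X_1} \times W^{X_2}$ is strongly movable. Next, Proposition \ref{th4} provides the weak functorial domination
\[
W^{X_1 \sqcup X_2} \lesssim W^{X_1} \times W^{X_2},
\]
so Theorem \ref{th2} transfers strong movability downward, yielding that $W^{X_1 \sqcup X_2}$ is strongly movable. A final application of Theorem \ref{th-main1} in the opposite direction shows that $X_1 \sqcup X_2$ is a strongly movable topological space.

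For the inductive step, assume the result for $n-1$ summands; then $Y := X_1 \sqcup \cdots \sqcup X_{n-1}$ is strongly movable by the inductive hypothesis, and $X = Y \sqcup X_n$ is a disjoint union of two strongly movable spaces, hence strongly movable by the $n=2$ case just established. Note that the connectivity components of $Y$ are precisely $X_1,\dots,X_{n-1}$ since each $X_i$ is itself connected, so the hypotheses carry through the induction.

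I do not foresee any real obstacle: the entire argument is a bookkeeping assembly of the preceding machinery. The only point that requires a moment's care is the appeal to Proposition \ref{th4}, which is stated for two summands; the inductive reformulation above is what makes the leap to finitely many summands rigorous without having to generalize Proposition \ref{th4} itself. The finiteness of the set of components is essential precisely because Proposition \ref{th4} is formulated for binary sums and because the induction would not terminate otherwise.
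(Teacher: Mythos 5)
Your proof is correct and follows exactly the route the paper intends: the paper's entire ``proof'' is the single sentence that Theorems \ref{th-proizved}, \ref{th2}, \ref{th-main1} and Proposition \ref{th4} imply the statement. Your induction on the number of components, reducing to the binary case via $X_1\sqcup\cdots\sqcup X_n=(X_1\sqcup\cdots\sqcup X_{n-1})\sqcup X_n$, is a sensible way of filling in the one detail the paper leaves implicit, namely that Proposition \ref{th4} is stated only for two summands.
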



\begin{thebibliography}{99}

\bibitem{bukur}
  I. Bukur, A. Delyanu
\emph{Introduction to the Theory of Categories and Functors}, Moscow, 1972.

\bibitem{gev1}
P. S. Gevorkyan,
\emph{On a Movability Criterion,}
Mathematical Notes,  \textbf{71} (2002), 281-284.

\bibitem{borsuk}
 K. Borsuk,
\emph{A note on the theory of shape of compacta,}
 Fund. Math. \textbf{67} (1970), 265-278.

\bibitem{dyd}
 J. Dydak,
\emph{On the Whitehead Theorem in pro-homotopy and on a question of Mardesic,}
 Bull. Acad. Polon. Sci. Math. Astronom. Phys, \textbf{23} (1975), 775-779.

\bibitem{gev2}
 P.S. Gevorgyan,
\emph{Movable categories,}
 Glasnik Mat., \textbf{38}(58) (2003), 177-183.

\bibitem{gevpop}
 P.S. Gevorgyan, I. Pop,
\emph{Uniformly movable categories and uniform movability of
topological spaces,}
 Bull. Polish Acad. Sci. Math., \textbf{55} (2007), 229-242.

\bibitem{mardsegal2}
 S. Marde\v{s}i\'{c},
\emph{Strongly movable compacta and shape retracts,}
 Proc. Intern. Sym. on Top.and its Appl., (Budva, 1972).

\bibitem{mardsegal1}
  S. Marde\v{s}i\'{c}, J. Segal,
\emph{Shape theory -- The inverse system approach,}
 North-Holland, Amsterdam, 1982.

\bibitem{morita}
 K. Morita,
\emph{On shapes of topological spaces,}
 Fund. Math., \textbf{86} (1975), 251-259.

\bibitem{pop}
 I. Pop,
\emph{A Categorical notion of movability,}
 Anal.Sci. University AL.I.CUZA, v.XLIX, (2003), 327-341.

\bibitem{wat}
 T. Watanabe,
\emph{On strong movability,}
 Bull. Acad. Polon. Sci. Math. Astronom. Phys, \textbf{25} (1977), 813-816.

\end{thebibliography}
\end{document}